\theoremstyle{plain}
\newtheorem{theorem}{Theorem}[section]
\newtheorem{cor}{Corollary}[theorem]
\theoremstyle{definition}
\newtheorem{definition}{Definition}[section]
\newtheorem{example}{Example}[theorem]
\begin{document}

\title[A complete characterization of smoothness of linear operators]{A complete characterization of smoothness in the space of bounded linear operators}
\author[Debmalya Sain, Kallol Paul, Arpita Mal and Anubhab Ray  ]{ Debmalya Sain, Kallol Paul, Arpita Mal and Anubhab Ray }

\newcommand{\acr}{\newline\indent}

\address[Sain]{Department of Mathematics\\ Indian Institute of Science\\ Bengaluru 560012\\ Karnataka \\India\\ }
\email{saindebmalya@gmail.com}

\address[Paul]{Department of Mathematics\\ Jadavpur University\\ Kolkata 700032\\ West Bengal\\ INDIA}
\email{kalloldada@gmail.com}

\address[Mal]{Department of Mathematics\\ Jadavpur University\\ Kolkata 700032\\ West Bengal\\ INDIA}
\email{arpitamalju@gmail.com}

\address[Ray]{Department of Mathematics\\ Jadavpur University\\ Kolkata 700032\\ West Bengal\\ INDIA}
\email{anubhab.jumath@gmail.com}

\thanks{The research of Dr. Debmalya Sain is sponsored by Dr. D. S. Kothari Postdoctoral Fellowship, under the mentorship of Professor Gadadhar Misra. Dr. Debmalya Sain feels elated to acknowledge the loving and motivating presence of his childhood friend Dr. Kuntal Mukherjee in every sphere of his life! The research of Prof Kallol Paul  is supported by project MATRICS  of DST, Govt. of India. The research of third author is supported by UGC, Govt. of India. The last author would like to thank DST, Govt. of India, for the financial support in the form of doctoral fellowship.
}

\subjclass[2010]{Primary 46B20, Secondary 47L05}
\keywords{smoothness; bounded linear operators; Birkhoff-James orthogonality; semi-inner-product}

\begin{abstract}
We completely characterize smoothness of bounded linear operators between infinite dimensional real normed linear spaces, probably for the very first time, by applying the concepts of Birkhoff-James orthogonality and semi-inner-products in normed linear spaces. In this context, the key aspect of our study is to consider norming sequences for a bounded linear operator, instead of norm attaining elements.  We also obtain a complete characterization of smoothness of bounded linear operators between real normed linear spaces, when the corresponding norm attainment set non-empty. This illustrates the importance of the norm attainment set in the study of smoothness of bounded linear operators.  Finally, we prove that G\^{a}teaux differentiability and Fr\'{e}chet differentiability are equivalent for compact operators in the space of bounded linear operators between a reflexive Kadets-Klee Banach space and a Fr\'{e}chet differentiable normed linear space, endowed with the usual operator norm. 

\end{abstract}

\maketitle

\section{Introduction.}
The study of smoothness of bounded linear operators between Banach spaces is a classical area of research in the field of geometry of Banach spaces. Several mathematicians including Holub \cite{HO}, Heinrich \cite{HR}, Abatzoglou \cite{A}, Hennefeld \cite{HF}, Kittaneh and Younis \cite{KY}, Rao \cite{R,Rao}, Werner \cite{W} etc., have studied it  on Banach spaces as well as Hilbert spaces. Recently Sain et. al. \cite{SPM} obtained separate necessary as well as sufficient conditions for smoothness of a bounded linear operator defined between infinite-dimensional normed linear spaces. However, to the best of our knowledge, no complete characterization of the same is known till date. The primary purpose of this paper is to give a definitive answer to this intriguing question.\\
Let $\mathbb{X},$ $\mathbb{Y}$ denote normed linear spaces and $\mathbb{H}$ denote a Hilbert space. Throughout the present paper, we consider only real normed linear spaces and real Hilbert spaces. Let $B_{\mathbb{X}}$ and $S_{\mathbb{X}}$ denote the unit ball and the unit sphere of $\mathbb{X}$ respectively, i.e., $B_{\mathbb{X}}=\{x\in \mathbb{X}:\|x\|\leq 1\}$ and $S_{\mathbb{X}}=\{x\in \mathbb{X}:\|x\|= 1\}.$ Let $\mathbb{B}(\mathbb{X},\mathbb{Y})~(\mathbb{K}(\mathbb{X},\mathbb{Y}))$ denote the space of all bounded (compact) linear operators from $\mathbb{X}$ to $\mathbb{Y}$. We write $ \mathbb{B}(\mathbb{X}, \mathbb{Y}) = \mathbb{B}(\mathbb{X}) $ and $ \mathbb{K}(\mathbb{X}, \mathbb{Y}) = \mathbb{K}(\mathbb{X}), $ if $ \mathbb{X}= \mathbb{Y}. $ For $T\in \mathbb{B}(\mathbb{X},\mathbb{Y}),$ let $M_T$ denote the set of all elements of $S_{\mathbb{X}}$ at which $T$ attains its norm, i.e., 
\[M_T=\{x\in S_{\mathbb{X}}:\|Tx\|=\|T\|\}.\] 
We say that a sequence $\{x_n\}\subseteq S_{\mathbb{X}}$ is a norming sequence for $T$ if $\|Tx_n\|\to \|T\|.$
Let $\mathbb{X}^*$ denote the dual space of $\mathbb{X}$. An element $f\in S_{\mathbb{X}^*}$ is said to be a norming linear functional for $x\in {\mathbb{X}}\setminus \{0\}$ if $f(x)=\|x\|.$ Note that, Hahn-Banach theorem ensures the existence of at least one norming linear functional for each $x\in {\mathbb{X}}\setminus \{0\}$. An element $x(\neq 0)$ is said to be a smooth point of $\mathbb{X}$ if $x$ has a unique norming linear functional. A normed linear space $\mathbb{X}$ is said to be smooth if every non-zero point of $\mathbb{X}$ is a smooth point. Let us observe that the notion of smoothness carries over naturally to the space of all bounded linear operators between normed linear spaces, endowed with the usual operator norm. To characterize smoothness of bounded linear operators between normed linear spaces, we use the notions of Birkhoff-James orthogonality \cite{B,J} and semi-inner-product \cite{G, L} on normed linear spaces. An element $x\in \mathbb{X}$ is said to be Birkhoff-James orthogonal to $y\in \mathbb{X},$ written as $x\perp_B y,$ if $\|x+\lambda y\|\geq \|x\| $ for all $\lambda \in \mathbb{R}.$ In this paper, we also use the following notations from \cite{C,Sa,SPM}:\\
For $x,~y\in \mathbb{X}$, we say that $y\in x^+$ if $\|x+\lambda y\|\geq \|x\| $ for all $\lambda \geq 0.$ Similarly, we say that $y\in x^-$ if $\|x+\lambda y\|\geq \|x\| $ for all $\lambda \leq 0.$ \\
Let $x^{\perp}=\{y\in \mathbb{X}:x\perp_B y\}.$\\
For $x,y\in \mathbb{X}$ and $\epsilon \in [0,1),$ $x$ is said to be approximate $ \epsilon $-Birkhoff-James orthogonal to $y$, written as $x\perp_D^{\epsilon}y,$ if $\|x+\lambda y\|\geq \sqrt{1-{\epsilon}^2}\|x\| $ for all $\lambda \in \mathbb{R}.$ \\
For $x,y\in \mathbb{X}$ and $\epsilon \in [0,1),$ we say that $y\in x^{+\epsilon}$ if $\|x+\lambda y\|\geq \sqrt{1-{\epsilon}^2}\|x\| $ for all $\lambda \geq 0.$ Similarly, we say that $y\in x^{-\epsilon}$ if $\|x+\lambda y\|\geq \sqrt{1-{\epsilon}^2}\|x\| $ for all $\lambda \leq 0.$ \\
In \cite{J}, James characterized smoothness of elements of $\mathbb{X}$ in terms of right additivity of Birkhoff-James orthogonality.  In particular, he proved that an element $(0 \neq) x \in \mathbb{X}$ is smooth if and only if for any $y,z\in \mathbb{X},$ $x\perp_B y$ and $x\perp_B z$ implies that $x\perp_B (y+z). $ We next mention the concept of semi-inner-product (s.i.p.) on a normed linear space $\mathbb{X},$ which is integral to our present work.\\
\begin{definition}
Let $\mathbb{X}$ be a normed linear space. A function $[~,~]:\mathbb{X}\times \mathbb{X}\to \mathbb{R}$ is said to be a s.i.p. on $\mathbb{X}$ if and only if for any $\alpha, ~\beta \in \mathbb{R} $ and for any $x,~y,~z\in \mathbb{X},$ it satisfies the following:\\
(i)$[\alpha x+\beta y,z]=\alpha[x,z]+\beta[y,z],$\\
(ii)$[x,x]>0,$ whenever $x\neq 0,$\\
(iii)$|[x,y]|^2\leq [x,x][y,y],$\\
(iv)$[x,\alpha y]=\alpha [x,y].$
\end{definition}
  Giles \cite{G} proved that for every normed linear space, there exists a s.i.p. $[~,~]$ which is compatible with the norm, i.e., $[x,x]=\|x\|^2 $ for all $x\in \mathbb{X}.$ Whenever we talk of a s.i.p. in a normed linear space, we assume that the s.i.p. is compatible with the norm. In a normed linear space, there may exist infinitely many semi-inner-products. However, there exists a unique s.i.p. on a normed linear space if and only if the space is smooth. Moreover, in an inner product space, the unique s.i.p. is the inner product itself.
  
  \smallskip  
	After this introductory part, this paper is demarcated into two sections. The first section deals with smoothness of operator $T$ without any restriction on the norm attainment set $M_T$, i.e., $M_T$ may or may not be empty.  We completely characterize smoothness of bounded linear operators defined between any two normed linear spaces, in terms of operator Birkhoff-James orthogonality and s.i.p. Let us re-emphasize here that although several mathematicians have studied smoothness of a bounded linear operator under certain restrictions on the concerned spaces or on the concerned operator \cite{A,DeK,HR,HF,HO,KY,PSG,R,Rao,W}, this is probably for the very first time that a complete characterization of smoothness in operator spaces, in the general most case, is being presented without any restriction on the spaces or on the operator. 
	
	\smallskip
	
In the next section, we obtain further characterizations of smoothness of a bounded (compact) linear operator, when the corresponding norm attainment set is non-empty. We would like to mention here that in the study of smoothness of bounded linear operators, the role of norm attainment set is of utmost importance. It is well-known that for the smoothness of a bounded linear operator $T$ on a Hilbert space $ \mathbb{H}, $ it is necessary that $  M_T \neq \emptyset, $ in fact, $ M_T = \{ \pm x_0 \} $ for some $ x_0 \in S_{\mathbb{H}}.$ The same fact i.e., $ M_T = \{ \pm x_0 \} $ for some $ x_0 \in S_{\mathbb{X}},$ is necessary as well as sufficient for the smoothness of a  compact linear operator defined on a reflexive smooth Banach space. As it turns out, the study of smoothness of a bounded linear operator is less complicated when the corresponding norm attainment set is non-empty. In particular, we prove that  $T$ is smooth if and only if $T$ attains norm at only one pair of points $\pm x_0$, $Tx_0$ is smooth in $\mathbb{Y}$ and for any $A\in \mathbb{B}(\mathbb{X},\mathbb{Y}),$ $ T \bot_B A \Leftrightarrow Tx_0 \bot_B Ax_0. $ We further prove that if $\mathbb{X}$ is a reflexive Kadets-Klee Banach space and $\mathbb{Y}$ is a normed linear space, then a compact operator $T$ is smooth in $\mathbb{B}(\mathbb{X},\mathbb{Y})$  if and only if  $T$ is smooth in $\mathbb{K}(\mathbb{X},\mathbb{Y})$. Let us recall that a normed linear space $\mathbb{X}$ is said to be Kadets-Klee if whenever $ \{x_n\} $ is a sequence in $ \mathbb{X} $ and $ x \in \mathbb{X} $ is such that $ x_n \stackrel{w}{\rightharpoonup} x $ and $ \lim \limits_{n\rightarrow \infty} \|x_n\| = \|x\|, $  then $ \lim\limits_{n\rightarrow \infty} \|x_n - x\| = 0. $ We also obtain an interesting necessary condition for smoothness of a bounded linear operator $T$ in terms of norming sequence, provided $M_T$ is non-empty. We demonstrate the applicability of our study by discussing two concrete examples in light of the results obtained by us in the present paper.

\smallskip
It is well-known  that smoothness of $ \mathbb{X} $ at a point $ x_0 \in \mathbb{X} \setminus \{0\} $ is equivalent to the G\^{a}teaux differentiability of the norm at $ x_0. $ To be more precise, $ (\mathbb{X}, \|~.~\|) $ is smooth at $ x_0 \in \mathbb{X} \setminus \{0\} $ if and only if the norm is G\^{a}teaux differentiable at $ x_0, $ i.e., $  \lim \limits_{h \rightarrow 0} \frac{\|x_0 + hy\| - \|x_0\|}{h} $ exists for all $ y \in \mathbb{X}. $ There is a stronger notion of differentiability in a normed linear space. We say that the norm of $ \mathbb{X} $ is Fr\'{e}chet differentiable at $ x_0 \in S_{\mathbb{X}} $ if $ \lim \limits_{h \rightarrow 0} \frac{\|x_0 + hy\| - \|x_0\|}{h} $ exists uniformly in $ y \in S_{\mathbb{X}}. $ We say that $ \mathbb{X} $ is G\^{a}teaux (Fr\'{e}chet)
differentiable if the norm of $ \mathbb{X} $ is G\^{a}teaux (Fr\'{e}chet)	at each $ x_0 \in S_{\mathbb{X}}. $ It is easy to see that if the norm is Fr\'{e}chet differentiable at $ x_0 \in S_{\mathbb{X}} $ then the norm is G\^{a}teaux differentiable at $ x_0 \in S_{\mathbb{X}}. $ Moreover, the converse is not true in general. We end this paper with a rather surprising result that if $ \mathbb{X} $ is a reflexive Kadets-Klee Banach space and $ \mathbb{Y} $ is a Fr\'{e}chet differentiable normed linear space then G\^{a}teaux differentiability and Fr\'{e}chet differentiability are equivalent in $ \mathbb{B}(\mathbb{X},\mathbb{Y}), $ for elements in $ \mathbb{K}(\mathbb{X},\mathbb{Y}). $ To be more precise, $ \mathbb{B}(\mathbb{X},\mathbb{Y}) $ with the usual operator norm is G\^{a}teaux differentiable at $ T \in \mathbb{K}(\mathbb{X},\mathbb{Y}), $ with $ \|T\|=1, $ if and only if $ \mathbb{B}(\mathbb{X},\mathbb{Y}) $ with the same norm is Fr\'{e}chet differentiable at $ T. $ 

\section{ Smoothness of  operator $T$ without any restriction on $M_T$}

In the following theorem, we obtain our promised characterization of smoothness of a bounded linear operator defined between any two normed linear spaces.

\begin{theorem}\label{theorem:smooth1}
Let $ \mathbb{X}, \mathbb{Y} $ be any two normed linear spaces and $ T \in \mathbb{B}(\mathbb{X}, \mathbb{Y}), $ with $ T \neq 0. $ Then the following two conditions are equivalent:\\
(i) $ T $ is smooth.\\  
(ii) For any $ A \in \mathbb{B}(\mathbb{X}, \mathbb{Y}), $ $ T \bot_{B} A \Leftrightarrow$  for any norming sequence $ \{ x_n \} $ for $T,$  any subsequential limit of $\{ [Ax_n, Tx_n] \}$ is $ 0, $ for any s.i.p. $ [~,~] $ on $ \mathbb{Y}. $
\end{theorem}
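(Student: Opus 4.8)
The plan is to prove both implications directly from the definition of smoothness, namely that $T$ is smooth iff it has a unique norming functional in $\mathbb{B}(\mathbb{X},\mathbb{Y})^*$, combined with James's characterization: $T$ is smooth if and only if Birkhoff-James orthogonality is right-additive at $T$, i.e. $T\perp_B A$ and $T\perp_B B$ imply $T\perp_B(A+B)$. The bridge between the operator side and the scalar-sequence side is the standard distance formula/Birkhoff-James criterion: $T\perp_B A$ if and only if there is a norming sequence $\{x_n\}$ for $T$ with $[Ax_n,Tx_n]\to 0$ (this is essentially the content of the earlier work \cite{SPM}, which I would cite). So the real work in (ii) is to upgrade "there exists a norming sequence along which the s.i.p. vanishes" to "for \emph{every} norming sequence, \emph{every} subsequential limit vanishes, for \emph{every} compatible s.i.p. on $\mathbb{Y}$".

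First I would prove $(i)\Rightarrow(ii)$. Assume $T$ is smooth. One direction of the biconditional in (ii) is easy: if every subsequential limit of $\{[Ax_n,Tx_n]\}$ along every norming sequence is $0$, then picking any single norming sequence and any convergent subsequence gives $[Ax_{n_k},Tx_{n_k}]\to 0$; since for each $k$, $\|Tx_{n_k}+\lambda Ax_{n_k}\|\cdot\|Tx_{n_k}\|\ge [Tx_{n_k}+\lambda Ax_{n_k},Tx_{n_k}] = \|Tx_{n_k}\|^2+\lambda[Ax_{n_k},Tx_{n_k}]$, taking $k\to\infty$ yields $\|T+\lambda A\|\ge\|T\|$ for all $\lambda$, i.e. $T\perp_B A$. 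For the converse direction, suppose $T\perp_B A$; I must show every subsequential limit along every norming sequence (for every s.i.p.) is $0$. Here is where smoothness is used: let $f$ be the unique norming functional of $T$. Using the Hahn-Banach / James machinery (and reflexivity-free arguments from \cite{SPM}), the condition $T\perp_B A$ forces $f(A)=0$; separately, for any norming sequence $\{x_n\}$ and s.i.p. $[~,~]$ on $\mathbb{Y}$, the functionals $y\mapsto [y,Tx_n]/\|Tx_n\|$ are norming functionals whose weak-* cluster points are norming functionals for $T$ — and by smoothness the unique such cluster point interacts with $f$ so that any subsequential limit of $[Ax_n,Tx_n]$ must equal $\|T\|f(A)=0$. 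The point is that smoothness collapses all the possible limiting s.i.p.-values to a single one, which is $0$.

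Then I would prove $(ii)\Rightarrow(i)$, most naturally by contraposition via James's criterion. Suppose $T$ is not smooth; then there exist $A_1,A_2\in\mathbb{B}(\mathbb{X},\mathbb{Y})$ with $T\perp_B A_1$ and $T\perp_B A_2$ but $T\not\perp_B(A_1+A_2)$. By the orthogonality direction of the claimed biconditional (which we are \emph{assuming} holds as (ii)), $T\perp_B A_1$ gives that every subsequential limit of $[A_1 x_n,Tx_n]$ along every norming sequence is $0$, and likewise for $A_2$; by additivity of the s.i.p. in the first slot, every subsequential limit of $[(A_1+A_2)x_n,Tx_n]=[A_1x_n,Tx_n]+[A_2x_n,Tx_n]$ is then $0$ as well. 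But then the other direction of (ii) — the one I verified in the easy half above, which holds unconditionally — forces $T\perp_B(A_1+A_2)$, a contradiction. Hence $T$ is smooth.

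The main obstacle is the converse direction inside $(i)\Rightarrow(ii)$: showing that smoothness of $T$ forces \emph{all} subsequential limits of $[Ax_n,Tx_n]$, over \emph{all} norming sequences and \emph{all} compatible semi-inner-products on $\mathbb{Y}$, to coincide (with $0$). When $M_T\ne\emptyset$ this is comparatively transparent, but in the general case one only has norming sequences, and different sequences — or different s.i.p.'s on a non-smooth $\mathbb{Y}$ — could a priori produce different cluster values; the crux is to show that the uniqueness of the norming functional of $T$ in the operator space rigidly ties together the limiting behaviour of $[\,\cdot\,,Tx_n]$ regardless of these choices. I expect to handle this by passing to weak-* limits of the functionals $[\,\cdot\,,Tx_n]/\|Tx_n\|$ (using Banach-Alaoglu), checking these limits are norming functionals for $T$ lifted appropriately, and invoking uniqueness; compactness of $B_{\mathbb{Y}^*}$ in the weak-* topology, not reflexivity, is what makes this go through without any hypothesis on $\mathbb{X}$ or $\mathbb{Y}$.
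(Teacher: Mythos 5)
Your proposal is correct, and half of it coincides with the paper's argument: the implication (ii)$\Rightarrow$(i) via James's right-additivity criterion (transfer orthogonality to vanishing subsequential limits, add, and use the unconditional estimate $\|T+\lambda A\|\,\|T\|\ge |[(T+\lambda A)x_{n},Tx_{n}]|$ to come back) is exactly what the paper does, as is the ``easy'' half of (i)$\Rightarrow$(ii). Where you genuinely diverge is the crucial half of (i)$\Rightarrow$(ii): given $T\perp_B A$, the paper argues by contradiction, assuming $[Ax_{n_i},Tx_{n_i}]\to r\neq 0$, forming the auxiliary operator $B=T-\frac{\|T\|^2}{r}A$, noting $[Bx_{n_i},Tx_{n_i}]\to 0$ hence $T\perp_B B$, and then using right-additivity at the smooth point $T$ to reach the absurdity $T\perp_B T$; you instead lift the s.i.p.\ functionals to the operator space via $\varphi_n(S)=[Sx_n,Tx_n]/\|Tx_n\|$, use $\|\varphi_n\|\le 1$, $\varphi_n(T)\to\|T\|$ and Banach--Alaoglu to see that every weak-* cluster point is a norming functional of $T$, hence (by smoothness) equals the unique $f$, so every subsequential limit of $[Ax_n,Tx_n]$ equals $\|T\|f(A)$, which vanishes because $T\perp_B A$ plus uniqueness of $f$ gives $f(A)=0$ by James. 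Both routes are valid; the paper's is more elementary and entirely norm-theoretic, while yours is more conceptual and yields the sharper statement that for \emph{any} $A$ all such limits equal $\|T\|f(A)$, identifying the limit rather than merely excluding nonzero values. Two small points to tighten: when you pass from the vanishing of limits for $A_1$ and $A_2$ to the sum $A_1+A_2$, convergence of the sum along a subsequence does not give convergence of each summand, so you must extract a further subsequence (boundedness makes this harmless, and the paper does exactly this); and your preliminary claim that $T\perp_B A$ is equivalent to the existence of a norming sequence with $[Ax_n,Tx_n]\to 0$ is stated as known from the literature but is not what \cite{SPM} proves --- fortunately it is never used in your actual argument, so it should simply be dropped or flagged as a consequence of the theorem being proved.
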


\begin{proof}

At first, suppose that $(ii)$ is true. Let $ T \bot_{B} A $ and $ T \bot_{B} B $ for some $ A, B \in \mathbb{B}(\mathbb{X}, \mathbb{Y}). $ We want to show that $ T \bot_{B} (A+B). $ Let $ \{ x_n \} $ be any norming sequence for $T$ and $ [~,~] $ be any s.i.p on $ \mathbb{Y}. $ Then $ \{[Ax_n, Tx_n]\} $ has a convergent subsequence, say, $ \{[Ax_{n_{i}}, Tx_{n_{i}}] \}. $ So, by the given condition $ [Ax_{n_{i}}, Tx_{n_{i}}] \rightarrow 0. $ Also, $ \{[Bx_{n_{i}}, Tx_{n_{i}}] \}$ has a convergent subsequence, say, $ [Bx_{n_{i_{k}}}, Tx_{n_{i_{k}}}] \rightarrow 0. $ Therefore, $ [(A+B)x_{n_{i_{k}}}, Tx_{n_{i_{k}}}] = [Ax_{n_{i_{k}}}, Tx_{n_{i_{k}}}]+[Bx_{n_{i_{k}}}, Tx_{n_{i_{k}}}] \rightarrow 0. $ Then for any scalar $ \lambda, $
\begin{eqnarray*}
 \|T+\lambda (A+B) \|\|T\| & \geq & \|(T+\lambda (A+B))x_{n_{i_{k}}} \|\|Tx_{n_{i_{k}}}\| \\
                       & \geq & |[ (T+\lambda (A+B))x_{n_{i_{k}}}, Tx_{n_{i_{k}}}]| \\
											 & = &    | [Tx_{n_{i_{k}}},Tx_{n_{i_{k}}}] + \lambda[(A+B)x_{n_{i_{k}}},Tx_{n_{i_{k}}}] | \\
											 & \rightarrow & \|T\|^2.	
\end{eqnarray*}
 Therefore, $ T \bot_{B}(A+B). $ Hence, from \cite[Th.5.1]{J}, we have, $ T $ is smooth.\\
Now, suppose that $(i)$ is true. Let $ A \in \mathbb{B}(\mathbb{X}, \mathbb{Y}) $ and for any norming sequence $ \{ x_n \} $ for $T,$  any subsequential limit of $ \{[Ax_n, Tx_n]\} $ be $ 0, $ for any s.i.p. $ [~,~] $ on $ \mathbb{Y}. $ Let $ [Ax_{n_{i}}, Tx_{n_{i}}] \rightarrow 0 $ for some subsequence $\{ x_{n_{i}}\} $ of $\{x_n\}$. Now, we want to prove $ T \bot_{B} A. $ For any scalar $\lambda,$
\begin{eqnarray*}
 \|T+\lambda A \|\|T\| & \geq & \|(T+\lambda A)x_{n_{i}} \|\|Tx_{n_{i}}\| \\
                       & \geq & |[ (T+\lambda A)x_{n_{i}}, Tx_{n_{i}}]| \\
											 & = &    | [Tx_{n_{i}},Tx_{n_{i}}] + \lambda[Ax_{n_{i}},Tx_{n_{i}}] | \\
											 & \rightarrow & \|T\|^2.	
\end{eqnarray*}
Thus, we have, $ \|T+\lambda A \| \geq \|T\| $ for all scalar $ \lambda. $ So, $ T \bot_{B} A. $\\
Now, let $ T \bot_{B} A. $  Suppose $ \{x_n\}$ is a norming sequence for $T$ and $ [~,~] $ is any s.i.p. on $ \mathbb{Y}. $ Since $\{ [Ax_n, Tx_n]\} $ is a bounded sequence of real numbers, it has at least one convergent subsequence. We want to prove that every subsequential limit of $\{ [Ax_n, Tx_n] \}$ is $0. $  On the contrary, suppose that there exists a subsequence $ \{x_{n_{i}}\} $ such that $ [Ax_{n_{i}}, Tx_{n_{i}}] \rightarrow r \neq 0. $ Let us consider the operator $ B = T - \frac{\|T\|^2}{r}A. $ Clearly, $ B \in \mathbb{B}(\mathbb{X}, \mathbb{Y}). $ Now,
\begin{eqnarray*}
 [Bx_{n_{i}}, Tx_{n_{i}}] & = & [(T - \frac{\|T\|^2}{r}A)x_{n_{i}}, Tx_{n_{i}}] \\
                       & = & \|Tx_{n_{i}}\|^2- \frac{\|T\|^2}{r}[Ax_{n_{i}}, Tx_{n_{i}}]\\
											 & \rightarrow & \|T\|^2 - \frac{\|T\|^2}{r}r = 0.	
\end{eqnarray*}
Thus, we have $ [Bx_{n_{i}}, Tx_{n_{i}}] \rightarrow 0. $ Now, proceeding as before, we can show that $ T \bot_{B} B. $ Again, as $  T \bot_{B} A, $ $ T \bot_{B}\frac{\|T\|^2}{r}A . $ Therefore, $ T \bot_{B} B+\frac{\|T\|^2}{r}A, $ i.e., $T\perp_B T,$ as $ T $ is smooth. This clearly gives a contradiction, since $ T \neq 0. $ Thus, every subsequential limit of $ \{[Ax_n, Tx_n] \}$ is $0,$  in any s.i.p. $[~,~]$ on $\mathbb{Y}$.  This completes the proof of the theorem.
\end{proof}

\section{Smoothness of operator $T$ when $M_T \neq \emptyset$}
We first note that if $T$ is a compact linear operator from a reflexive Banach space to any normed linear space, then Paul et. al. \cite{PSG} proved that $T$ is smooth if and only if $M_T = \{\pm  x_0\}, $ for some $x_0 \in S_{\mathbb{X}}$ and $ Tx_0$ is smooth. Combining this result with Theorem \ref{theorem:smooth1}, we get the following theorem.

\begin{theorem}
Let $ 0 \neq T \in \mathbb{K}(\mathbb{X},\mathbb{Y}), $ where $ \mathbb{X} $ is a reflexive Banach space and $ \mathbb{Y} $ is any normed linear space.  Then the following conditions are equivalent:\\
(i) $T$ is smooth.\\
(ii) $ M_T=\{ \pm x_0\} $ for some $ x_0 \in S_\mathbb{X} $ and $ Tx_0 $ is a smooth point in $ \mathbb{Y}. $ \\
(iii) For any $ A \in \mathbb{K}(\mathbb{X}, \mathbb{Y}), $ $ T \bot_{B} A \Leftrightarrow $ for any norming sequence $ \{ x_n \} $ for $T,$  any subsequential limit of $ \{[Ax_n, Tx_n]\} $ is $ 0, $ for any s.i.p. $ [~,~] $ on $ \mathbb{Y}. $
\end{theorem}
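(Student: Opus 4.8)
The strategy is to prove $(i)\Leftrightarrow(ii)$ and $(i)\Leftrightarrow(iii)$; the cycle then closes. The equivalence $(i)\Leftrightarrow(ii)$ is precisely the theorem of Paul et. al. \cite{PSG} recalled above, which applies since $\mathbb{X}$ is reflexive and $T$ is compact; note also that reflexivity of $\mathbb{X}$ together with compactness of $T$ already forces $M_T\neq\emptyset$. For $(i)\Rightarrow(iii)$: if $T$ is smooth, then Theorem \ref{theorem:smooth1} gives that $T\bot_B A\Leftrightarrow$ [every subsequential limit of $\{[Ax_n,Tx_n]\}$ is $0$, for every norming sequence $\{x_n\}$ for $T$ and every s.i.p. $[~,~]$ on $\mathbb{Y}$] for \emph{every} $A\in\mathbb{B}(\mathbb{X},\mathbb{Y})$; restricting the quantifier over $A$ to the subspace $\mathbb{K}(\mathbb{X},\mathbb{Y})$ is exactly $(iii)$.

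The real content is $(iii)\Rightarrow(i)$, and I would obtain it by deriving $(ii)$. The pivotal remark is that, for each $x\in M_T$, the constant sequence $x_n\equiv x$ is a norming sequence for $T$, so $(iii)$ forces: whenever $A\in\mathbb{K}(\mathbb{X},\mathbb{Y})$ satisfies $T\bot_B A$, one has $[Ax,Tx]=0$ for every $x\in M_T$ and every s.i.p. $[~,~]$ on $\mathbb{Y}$. \emph{Step 1: $M_T=\{\pm x_0\}$.} Fix $x_0\in M_T$. If some $x_1\in M_T$ is not a scalar multiple of $x_0$ (note $x_1=cx_0\in S_{\mathbb{X}}\cap M_T$ forces $|c|=1$), choose by Hahn--Banach a $g\in\mathbb{X}^*$ with $g(x_0)=0$ and $g(x_1)=1$, and let $A\in\mathbb{K}(\mathbb{X},\mathbb{Y})$ be the rank-one operator $Ax=g(x)\,Tx_1$. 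Since $Ax_0=0$ we get $\|(T+\lambda A)x_0\|=\|Tx_0\|=\|T\|$ for all $\lambda$, hence $T\bot_B A$; but $[Ax_1,Tx_1]=[Tx_1,Tx_1]=\|Tx_1\|^2=\|T\|^2\neq 0$, contradicting the pivotal remark. Thus $M_T=\{\pm x_0\}$. \emph{Step 2: $Tx_0$ is smooth.} If not, pick distinct norming functionals $f_1,f_2\in S_{\mathbb{Y}^*}$ of $Tx_0$ and a vector $z\in\ker f_1\setminus\ker f_2$ (such $z$ exists since $\ker f_1\neq\ker f_2$), choose $g\in\mathbb{X}^*$ with $g(x_0)=1$, and set $Ax=g(x)\,z$, so $A\in\mathbb{K}(\mathbb{X},\mathbb{Y})$ and $Ax_0=z$. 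Then $\|(T+\lambda A)x_0\|=\|Tx_0+\lambda z\|\geq f_1(Tx_0+\lambda z)=\|Tx_0\|=\|T\|$ gives $T\bot_B A$, while for a norm-compatible s.i.p. on $\mathbb{Y}$ with $[u,Tx_0]=\|Tx_0\|\,f_2(u)$ for all $u$ one has $[Ax_0,Tx_0]=[z,Tx_0]=f_2(z)\,\|Tx_0\|\neq 0$, again contradicting the pivotal remark. Hence $Tx_0$ is smooth, $(ii)$ holds, and $(i)$ follows from \cite{PSG}.

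The step I expect to need most care is the s.i.p. bookkeeping in Step 2: identifying ``$[Ax_0,Tx_0]=0$ for every s.i.p.'' with ``$f(Ax_0)=0$ for every norming functional $f$ of $Tx_0$'', and conversely producing a norm-compatible s.i.p. that realizes a prescribed norming functional of $Tx_0$; this rests on the correspondence between norm-compatible semi-inner-products and (odd) selections of norming functionals, going back to \cite{G,L}. One should also dispose of the trivial case $\dim\mathbb{Y}=1$, where every $Tx$ is automatically smooth and only Step 1 is needed (and goes through verbatim). Finally, a route closer to literally ``combining'' the two cited results is available for $(iii)\Rightarrow(i)$: run the $(ii)\Rightarrow(i)$ half of the proof of Theorem \ref{theorem:smooth1} inside the normed space $\mathbb{K}(\mathbb{X},\mathbb{Y})$ --- every operator produced there remains compact, so $(iii)$ applies --- to conclude via \cite[Th.5.1]{J} that $T$ is a smooth point of $\mathbb{K}(\mathbb{X},\mathbb{Y})$, and then upgrade this to $(ii)$ by observing that distinct points of $M_T$, or distinct norming functionals of $Tx_0$, already produce distinct norming functionals of $T$ on $\mathbb{K}(\mathbb{X},\mathbb{Y})$ via rank-one test operators.
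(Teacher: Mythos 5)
Your proposal is correct, and it is substantially more careful than the paper, which offers no proof at all beyond the sentence ``Combining this result with Theorem \ref{theorem:smooth1}, we get the following theorem.'' The directions $(i)\Leftrightarrow(ii)$ (via \cite{PSG}) and $(i)\Rightarrow(iii)$ (restricting the quantifier in Theorem \ref{theorem:smooth1} from $\mathbb{B}(\mathbb{X},\mathbb{Y})$ to $\mathbb{K}(\mathbb{X},\mathbb{Y})$) match the paper's intent exactly. Where you genuinely add value is $(iii)\Rightarrow(i)$: as you observe, Theorem \ref{theorem:smooth1} cannot be invoked verbatim here, since its hypothesis requires the orthogonality--s.i.p.\ equivalence for \emph{all} bounded $A$, whereas $(iii)$ supplies it only for compact $A$; running its proof inside $\mathbb{K}(\mathbb{X},\mathbb{Y})$ yields a priori only smoothness of $T$ as a point of $\mathbb{K}(\mathbb{X},\mathbb{Y})$. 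Your repair --- testing $(iii)$ against constant norming sequences $x_n\equiv x$ for $x\in M_T$ (note $M_T\neq\emptyset$ by reflexivity plus compactness) and against rank-one, hence compact, operators $Ax=g(x)w$ to force both $M_T=\{\pm x_0\}$ and smoothness of $Tx_0$ --- is sound, and the s.i.p.\ you construct in Step 2 from an odd selection of norming functionals with $f_{Tx_0/\|Tx_0\|}=f_2$ is exactly the device the paper itself uses in the proof of Theorem \ref{theorem:nonempty}(ii), so that step is consistent with the paper's own machinery. The only caveat, which is an ambiguity of the paper rather than of your argument, is whether ``$T$ is smooth'' in $(i)$ means smooth in $\mathbb{B}(\mathbb{X},\mathbb{Y})$ or in $\mathbb{K}(\mathbb{X},\mathbb{Y})$; your two routes (the direct derivation of $(ii)$, and the alternative of running Theorem \ref{theorem:smooth1} inside $\mathbb{K}(\mathbb{X},\mathbb{Y})$ and appealing to \cite[Th.\ 5.1]{J}) between them cover both readings.
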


Abatzoglou \cite{A} and Paul et. al. \cite{PSG} independently proved that if $T$ is a bounded linear operator on Hilbert space $\mathbb{H}$, then $T$ is smooth if and only if $ M_{T}= \{\pm x_{0}\} $ for some $x_0\in S_{\mathbb{H}}$ and $ \| T\|_{H_{0}} < \|T\|, $ where $ x_{0} \bot H_{0}. $ Combining this result with Theorem \ref{theorem:smooth1}, we obtain the following theorem:

\begin{theorem}
Let $ \mathbb{H} $ be a Hilbert space and $ 0 \neq T \in \mathbb{B}(\mathbb{H}). $ Then the following are equivalent:\\
(i) $T$ is smooth.\\
(ii) $ M_{T}= \{\pm x_{0}\}, $ for some $x_0\in S_{\mathbb{H}}$  and $ \| T\|_{H_{0}} < \|T\|, $ where $ x_{0} \bot H_{0}. $ \\
(iii) For any $ A \in \mathbb{B}(\mathbb{H}), $ $ T \bot_{B} A \Leftrightarrow$ for any norming sequence $ \{x_{n}\} $ for $T,$ any subsequential limit of $\{ \langle Ax_{n}, Tx_{n}\rangle \}$ is $ 0. $
\end{theorem}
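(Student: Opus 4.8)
The plan is to deduce the theorem by simply assembling two facts that are already available, so the proof will be short. The equivalence of $(i)$ and $(ii)$ is precisely the characterization of operator smoothness on Hilbert space due to Abatzoglou \cite{A} and Paul et. al. \cite{PSG} recalled immediately before the statement; that half therefore requires no new argument (one may note in passing that, as part of this, the Hilbert space case forces $M_T \neq \emptyset$, indeed $M_T = \{\pm x_0\}$).

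The only substantive point is the equivalence of $(i)$ and $(iii)$, and here the work has in effect been carried out already in Theorem \ref{theorem:smooth1}. I would apply that theorem with $\mathbb{X} = \mathbb{Y} = \mathbb{H}$ and the given $0 \neq T \in \mathbb{B}(\mathbb{H})$. The key observation is that every Hilbert space is smooth, and hence, as recalled in the introduction, admits a \emph{unique} semi-inner-product, which is the inner product $\langle~,~\rangle$ itself. Consequently the clause ``for any s.i.p. $[~,~]$ on $\mathbb{Y}$'' in condition $(ii)$ of Theorem \ref{theorem:smooth1} collapses to a single statement involving only $\langle~,~\rangle$, and $[Ax_n, Tx_n]$ becomes $\langle Ax_n, Tx_n \rangle$. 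Thus condition $(iii)$ above is verbatim condition $(ii)$ of Theorem \ref{theorem:smooth1} specialized to this setting, and the equivalence $(i) \Leftrightarrow (iii)$ is immediate.

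Combining $(i) \Leftrightarrow (ii)$ with $(i) \Leftrightarrow (iii)$ yields the equivalence of all three conditions, completing the proof. I do not anticipate a genuine obstacle: the one thing to verify is that the norming-sequence condition of Theorem \ref{theorem:smooth1}$(ii)$ coincides with $(iii)$ once uniqueness of the s.i.p. on $\mathbb{H}$ is invoked, which is straightforward. It is perhaps worth emphasizing why the norming-sequence language is pulling its weight here: a bounded operator on a Hilbert space need not attain its norm at all, so a direct attempt to match $(ii)$ with $(iii)$ through norm-attaining vectors would be delicate, whereas routing the argument through Theorem \ref{theorem:smooth1} avoids the issue completely.
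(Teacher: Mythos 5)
Your proposal is correct and is exactly the paper's argument: the paper also derives this theorem by citing Abatzoglou and Paul et al.\ for the equivalence of (i) and (ii), and by specializing Theorem \ref{theorem:smooth1} to $\mathbb{H}$, where the unique s.i.p.\ compatible with the norm is the inner product, to get (i) $\Leftrightarrow$ (iii). No gaps; nothing further is needed.
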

These two theorems highlight the role of norm attainment set in the study of smoothness of bounded linear operators.
 In the following theorem, we obtain a  nice characterization of smoothness of $T\in \mathbb{B}(\mathbb{X}, \mathbb{Y})$ for any two normed linear spaces $\mathbb{X}$ and $\mathbb{Y}$ whenever $M_T\neq \emptyset.$ 

\begin{theorem}\label{theorem:nonempty}
Let $ \mathbb{X}, \mathbb{Y} $ be two normed linear spaces. Let $ T \in \mathbb{B}(\mathbb{X}, \mathbb{Y}) $ be such that $ T \neq 0 $ and $ M_T \neq \phi. $ Then $ T $ is smooth operator if and only if the following conditions hold:\\
(i) $ M_T=\{\pm x_0\}, $ for some $ x_0 \in S_{\mathbb{X}}. $\\
(ii) $ Tx_0 $ is smooth point in $ \mathbb{Y}. $\\
(iii) For any $A\in \mathbb{B}(\mathbb{X},\mathbb{Y}),$ $ T \bot_B A \Leftrightarrow Tx_0 \bot_B Ax_0. $
\end{theorem}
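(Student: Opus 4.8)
The plan is to route everything through two instances of James's additivity criterion for Birkhoff-James orthogonality \cite{J}: the version for smooth \emph{points} and the version for smooth \emph{operators} \cite[Th.5.1]{J}. The bridge between the point level and the operator level is Theorem \ref{theorem:smooth1}: since $M_T \neq \emptyset$, we are free to feed it the \emph{constant} norming sequence $\{x_0\}$, which collapses its semi-inner-product condition into the single statement that $[Ax_0, Tx_0] = 0$ for every s.i.p.\ on $\mathbb{Y}$. Two elementary observations will recur. First, for $\varphi \in \mathbb{X}^*$ and $y \in \mathbb{Y}$ the rank-one map $x \mapsto \varphi(x) y$ belongs to $\mathbb{B}(\mathbb{X},\mathbb{Y})$ and is trivial to evaluate at points of $M_T$; in particular, if it kills $x_0$ then it is automatically Birkhoff-James orthogonal to $T$. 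Second, for any s.i.p.\ $[~,~]$ on $\mathbb{Y}$ the functional $v \mapsto [v, Tx_0]/\|Tx_0\|$ is norming for $Tx_0$, so $[v, Tx_0] = 0$ forces $Tx_0 \perp_B v$ by the functional description of Birkhoff-James orthogonality.

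The converse is immediate, so I would dispatch it first: assuming $(i)$--$(iii)$ and $T \perp_B A$, $T \perp_B B$, condition $(iii)$ gives $Tx_0 \perp_B Ax_0$ and $Tx_0 \perp_B Bx_0$; since $Tx_0$ is smooth by $(ii)$, James's criterion for smooth points yields $Tx_0 \perp_B (Ax_0 + Bx_0) = (A+B)x_0$; applying $(iii)$ once more gives $T \perp_B (A+B)$, hence $T$ is smooth by \cite[Th.5.1]{J}.

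For the forward direction I would assume $T$ smooth and prove $(i)$, then $(ii)$, then $(iii)$. For $(i)$: if $x_1 \in M_T$ with $x_1 \neq \pm x_0$ then $x_1 \notin \operatorname{span}\{x_0\}$, so Hahn-Banach gives $\varphi \in \mathbb{X}^*$ with $\varphi(x_0) = 0 \neq \varphi(x_1)$; for any $y \in S_{\mathbb{Y}}$ the operator $Ax = \varphi(x) y$ is Birkhoff-James orthogonal to $T$ (it kills $x_0 \in M_T$), so Theorem \ref{theorem:smooth1} on the constant norming sequence $\{x_1\}$ forces $[y, Tx_1] = 0$ for \emph{every} $y \in S_{\mathbb{Y}}$ and every s.i.p., which fails for $y = Tx_1/\|Tx_1\|$. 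For $(ii)$: if $Tx_0$ were not smooth, pick $y_1, y_2 \in \mathbb{Y}$ with $Tx_0 \perp_B y_1$, $Tx_0 \perp_B y_2$, $Tx_0 \not\perp_B (y_1+y_2)$, fix a norming functional $f$ for $x_0$, and put $A_i x = f(x) y_i$; since $A_i x_0 = y_i$ and $Tx_0 \perp_B A_i x_0$ we get $T \perp_B A_i$, hence $T \perp_B (A_1 + A_2)$ by \cite[Th.5.1]{J}, so Theorem \ref{theorem:smooth1} on $\{x_0\}$ gives $[y_1 + y_2, Tx_0] = 0$ for every s.i.p.\ and therefore $Tx_0 \perp_B (y_1 + y_2)$, a contradiction. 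For $(iii)$: the implication $Tx_0 \perp_B Ax_0 \Rightarrow T \perp_B A$ is just the norm estimate $\|T + \lambda A\| \geq \|Tx_0 + \lambda Ax_0\| \geq \|Tx_0\| = \|T\|$ (no smoothness needed), while $T \perp_B A \Rightarrow Tx_0 \perp_B Ax_0$ follows by applying Theorem \ref{theorem:smooth1} to $\{x_0\}$ to obtain $[Ax_0, Tx_0] = 0$ and then using the norming-functional observation (equivalently, the uniqueness of the compatible s.i.p.\ at the now-smooth point $Tx_0$).

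The step I expect to be the real obstacle is $(i)$: the argument hinges on choosing the right test operator, and the key insight is that a rank-one operator vanishing at $x_0$ is forced to be Birkhoff-James orthogonal to $T$, which is exactly what makes Theorem \ref{theorem:smooth1} usable at a second norming point. A secondary subtlety throughout is keeping straight the one-way passage from ``$[~\cdot~, Tx_0]$ vanishes in every compatible s.i.p.'' to ``$Tx_0 \perp_B \cdot$'' --- this is the mechanism that converts the semi-inner-product apparatus of Theorem \ref{theorem:smooth1} back into the purely orthogonality-theoretic statement $(iii)$.
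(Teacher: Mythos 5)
Your proposal is correct, and it reorganizes the argument around a single engine that the paper only uses sporadically: Theorem \ref{theorem:smooth1} specialized to the \emph{constant} norming sequence at a norm-attaining point, combined with the observation that $v \mapsto [v,Tx_0]/\|Tx_0\|$ is a norming functional for $Tx_0$, so $[v,Tx_0]=0$ (even for one compatible s.i.p.) forces $Tx_0\perp_B v$. The sufficiency direction is identical to the paper's. For necessity the routes diverge: the paper imports $(i)$ from \cite[Th.~4.5]{PSG}, whereas you prove it directly with a rank-one operator $\varphi(\cdot)y$ vanishing at $x_0$ and tested at a putative second norm-attaining point $x_1$ --- a clean, self-contained argument. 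For $(ii)$ the paper builds a bespoke compatible s.i.p.\ out of two distinct norming functionals $f\neq g$ of $Tx_0$ and a vector $y\in\ker g\setminus\ker f$, then contradicts Theorem \ref{theorem:smooth1}; you instead invoke James's right-additivity criterion at the point $Tx_0$, lift the two witnesses $y_1,y_2$ to rank-one operators $f(\cdot)y_i$, use operator smoothness to get $T\perp_B(A_1+A_2)$, and convert back --- this avoids constructing any explicit s.i.p. For $(iii)$ the paper runs the perturbation trick $B=T-\frac{\|T\|}{r}A$ with the unique norming functional of $Tx_0$ and derives $T\perp_B T$; you again just read off $[Ax_0,Tx_0]=0$ from Theorem \ref{theorem:smooth1} on $\{x_0\}$ and apply the norming-functional bridge, which is shorter and incidentally shows $(iii)$ does not depend on $(ii)$. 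Both proofs are sound; yours is more uniform and more self-contained, the paper's is more explicit about the semi-inner-product structure it is exploiting.
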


\begin{proof}
We first prove the necessary part of the theorem. Let $ T \in \mathbb{B}(\mathbb{X}, \mathbb{Y}) $ be smooth. Since $ M_T \neq \phi, ~(i)$ follows  from \cite[Th. 4.5]{PSG}. \\
 $(ii)$ If possible, suppose that $Tx_0$ is not a smooth point of $\mathbb{Y}$. Then there exist $f,~g\in \mathbb{Y}^*$ such that $f\neq g$, $\|f\|=\|g\|=\|Tx_0\|$ and $f(Tx_0)=g(Tx_0)=\|Tx_0\|^2$. Now, for each $v(\neq \frac{Tx_0}{\|Tx_0\|})\in S_{\mathbb{Y}}$, choose exactly one $ f_v \in S_{\mathbb{Y}^{*}} $ such that $ f_v(v)=\|v\|=1$ and for $\frac{Tx_0}{\|Tx_0\|}$, choose $f_\frac{Tx_0}{\|Tx_0\|}=\frac{1}{\|Tx_0\|}f$. For $\lambda \in \mathbb{R},$ choose $f_{\lambda v}=\lambda f_v $ for all $v\in S_{\mathbb{Y}}$. Clearly, $f_{Tx_0}=f.$ Then the mapping $ [~,~]: \mathbb{Y}\times \mathbb{Y}\longrightarrow \mathbb{R} $ defined by  $ [u,v] = f_{v}(u) $ for all $ u,~v \in \mathbb{Y}, $ is a s.i.p. on $\mathbb{Y}$. Moreover, $f(y)=[y,Tx_0] $ for all $y\in \mathbb{Y}$. Since $f\neq g$, it is easy to check that $\ker(f)\neq \ker(g)$. Therefore, there exists $y\in \ker(g)\setminus \ker(f)$. Hence, $[y,Tx_0]=f(y)\neq 0$. Since $y\in \ker(g)$, we have for each scalar $\lambda,$ 
$$\|Tx_0\|\|Tx_0+\lambda y\|=\|g\|\|Tx_0+\lambda y\|\geq |g(Tx_0+\lambda y)|=\|Tx_0\|^2.$$
 This implies that $Tx_0\bot_B y$. Now, let $H$ be a hyperspace such that $x_0\bot_B H$. Then any $z\in \mathbb{X}$ can be uniquely written as $z=ax_0+h$, where $h\in H$ and $a\in \mathbb{R}$. Define $A:\mathbb{X}\longrightarrow\mathbb{Y}$ by $A(ax_0+h)=ay$. Then it is easy to check that $A\in\mathbb{B}(\mathbb{X},\mathbb{Y})$. Now, for any $\lambda \in \mathbb{R}$ we have, 
$$\|T+\lambda A\|\geq \|Tx_0+\lambda Ax_0\|=\|Tx_0+\lambda y\|\geq \|Tx_0\|=\|T\|,$$ since $Tx_0\bot_B y$ and $x_0\in M_T$. Thus, $T\bot_B A$. Therefore, by Theorem \ref{theorem:smooth1}, we have, $[Ax_0,Tx_0]=0$, since $x_0\in M_T$. This gives that $[y,Tx_0]=f(y)=0$, a contradiction. Hence, $Tx_0$ is smooth. \\
$(iii)$ Suppose that $ A \in \mathbb{B}(\mathbb{X}, \mathbb{Y}). $ If $ Tx_0 \bot_{B} Ax_0, $ then clearly, $ T \bot_{B} A. $ Now, suppose that $ T \bot_{B} A. $ We want to show $ Tx_0 \bot_{B} Ax_0. $ If possible, suppose that $ Tx_0 \not\perp_{B} Ax_0. $ Let $ f $ be the unique norming linear functional of $ Tx_0. $ If possible, suppose that $f(Ax_0)=0.$ Then for any scalar $\lambda,$ $\|Tx_0+\lambda Ax_0\|\geq |f(Tx_0+\lambda Ax_0)|=|f(Tx_0)|=\|Tx_0\|.$ Hence, $Tx_0\perp_B Ax_0,$ a contradiction. Therefore, $ f(Ax_0) \neq 0. $ Let $ f(Ax_0) = r (\neq 0). $ Now, consider the linear operator $ B= T- \frac{\|T\|}{r}A. $ Clearly, $ B \in \mathbb{B}(\mathbb{X}, \mathbb{Y}). $ Then 
\begin{eqnarray*}
f(Bx_0) & = & f(Tx_0- \frac{\|T\|}{r}Ax_0)\\
        & = & f(Tx_0) - \frac{\|T\|}{r}f(Ax_0)\\
				& = & \|Tx_0\|- \frac{\|T\|}{r}r\\
				& = & \|T\|-\|T\|=0.
\end{eqnarray*} 
Therefore, $ Tx_0 \bot_{B} Bx_0, $ by \cite[Th. 2.1]{J}. Now, for all $ \lambda \in \mathbb{R},$ we have,
\[ \|T +\lambda B \| \geq \|Tx_0 + \lambda Bx_0\| \geq \|Tx_0\|= \|T\|. \]
 Hence, $ T \bot_{B} B. $ Now, $ T $ is smooth, $ T \bot_{B} B $ and $ T \bot_{B} \frac{\|T\|}{r}A $ implies that $ T \bot_{B} (B+\frac{\|T\|}{r}A)=T, $ which is a contradiction. Thus, $ T \bot_{B} A \Rightarrow Tx_0 \bot_{B} Ax_0. $
Therefore, we get $ T \bot_{B} A \Leftrightarrow Tx_0 \bot_{B} Ax_0. $\\
Now, we prove the sufficient part of the theorem. Let $ A, B \in \mathbb{B}(\mathbb{X}, \mathbb{Y}) $ be such that $ T \bot_{B} A $ and $ T \bot_{B} B. $ Then from given condition we have, $ Tx_0 \bot_{B} Ax_0 $ and $ Tx_0 \bot_{B} Bx_0. $ Since $ Tx_0 $ is smooth, $ Tx_0 \bot_{B} (Ax_0+Bx_0). $ Thus, we have, $ T \bot_{B} (A+B), $ as $ x_0 \in M_T. $ Therefore, $ T $ is smooth. This completes the proof of the theorem.
\end{proof}

In the following theorem, we obtain an easy sufficient condition for smoothness of $T\in \mathbb{B}(\mathbb{X},\mathbb{Y}).$

\begin{theorem}\label{theorem:suff}
Let $\mathbb{X},\mathbb{Y}$ be normed linear spaces. Let $T\in \mathbb{B}(\mathbb{X},\mathbb{Y})$ be such that  the following conditions hold:\\
(i) $M_T=\{\pm x_0\}$ for some $ x_0 \in S_{\mathbb{X}}. $\\
(ii) $Tx_0$ is a smooth point in $\mathbb{Y}.$\\
(iii) For every norming sequence $\{x_n\}$ for $T,$ $\{x_n\}$ has a convergent subsequence converging to $ ax_0,$ where $|a|=1.$\\
 Then $T$ is smooth. 
\end{theorem}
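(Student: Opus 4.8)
The plan is to derive Theorem~\ref{theorem:suff} from Theorem~\ref{theorem:nonempty}. Hypotheses (i) and (ii) here are literally (i) and (ii) of Theorem~\ref{theorem:nonempty}, so the entire task reduces to verifying condition (iii) there, namely that for every $A\in\mathbb{B}(\mathbb{X},\mathbb{Y})$ one has $T\perp_B A\Leftrightarrow Tx_0\perp_B Ax_0$. (One could equally well verify condition (ii) of Theorem~\ref{theorem:smooth1} directly, or just repeat the one-line sufficiency argument inside the proof of Theorem~\ref{theorem:nonempty}; all routes rest on the same estimate.) The implication $Tx_0\perp_B Ax_0\Rightarrow T\perp_B A$ is free: since $x_0\in M_T$, for every $\lambda$ we have $\|T+\lambda A\|\ge\|Tx_0+\lambda Ax_0\|\ge\|Tx_0\|=\|T\|$. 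So the content lies entirely in the converse.

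Thus assume $T\perp_B A$, and let $f\in S_{\mathbb{Y}^*}$ be the unique norming functional of $Tx_0$ provided by (ii). It suffices to prove $f(Ax_0)=0$, because then $\|Tx_0+\lambda Ax_0\|\ge|f(Tx_0+\lambda Ax_0)|=\|Tx_0\|$ for all $\lambda$, i.e. $Tx_0\perp_B Ax_0$. The idea is to test $T\perp_B A$ against the small perturbations $T\pm\tfrac1n A$. Since $\|T\pm\tfrac1n A\|\ge\|T\|$, I would choose $u_n,v_n\in S_{\mathbb{X}}$ with $\|(T+\tfrac1n A)u_n\|>\|T\|-\tfrac1{n^2}$ and $\|(T-\tfrac1n A)v_n\|>\|T\|-\tfrac1{n^2}$. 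A triangle-inequality estimate then gives $\|Tu_n\|\ge\|(T+\tfrac1n A)u_n\|-\tfrac1n\|A\|\to\|T\|$ and likewise $\|Tv_n\|\to\|T\|$, so $\{u_n\}$ and $\{v_n\}$ are norming sequences for $T$. Hypothesis (iii) now lets me pass to subsequences (not relabelled) with $u_n\to a_+x_0$ and $v_n\to a_-x_0$, where $|a_\pm|=1$; consequently $(T+\tfrac1n A)u_n\to a_+Tx_0$ and $(T-\tfrac1n A)v_n\to a_-Tx_0$ in norm.

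Next I would pick norming functionals $g_n,h_n\in S_{\mathbb{Y}^*}$ for $(T+\tfrac1n A)u_n$ and $(T-\tfrac1n A)v_n$ respectively, and invoke the norm-to-weak$^*$ continuity of the duality map at a smooth point: since $a_\pm Tx_0$ is a smooth point with unique norming functional $a_\pm f$, this forces $g_n\xrightarrow{w^*}a_+f$ and $h_n\xrightarrow{w^*}a_-f$. Writing $g_n(Au_n)=a_+g_n(Ax_0)+g_n(A(u_n-a_+x_0))$ and using $\|g_n\|=1$ together with $u_n\to a_+x_0$, one obtains $g_n(Au_n)\to a_+^2f(Ax_0)=f(Ax_0)$, and similarly $h_n(Av_n)\to f(Ax_0)$. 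On the other hand, from
\[ \|T\|-\tfrac1{n^2}<g_n\big((T+\tfrac1n A)u_n\big)=g_n(Tu_n)+\tfrac1n g_n(Au_n)\le\|T\|+\tfrac1n g_n(Au_n) \]
I would get $g_n(Au_n)>-\tfrac1n$, hence $f(Ax_0)\ge 0$ in the limit; the mirror-image computation with $h_n$ and $T-\tfrac1n A$ gives $g_n$ replaced by $h_n$ and yields $h_n(Av_n)<\tfrac1n$, so $f(Ax_0)\le 0$. Therefore $f(Ax_0)=0$, condition (iii) of Theorem~\ref{theorem:nonempty} holds, and $T$ is smooth.

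The step I expect to be the main obstacle — or at least the one requiring the most care — is the norm-to-weak$^*$ continuity of the duality map at a smooth point: if $w_n\to w\neq 0$ in norm, $w$ is a smooth point with normalized norming functional $\phi$, and $\phi_n\in S_{\mathbb{Y}^*}$ norms $w_n$, then $\phi_n\xrightarrow{w^*}\phi$. This is classical and follows from weak$^*$ compactness of $B_{\mathbb{Y}^*}$ plus the remark that every weak$^*$ cluster point of $\{\phi_n\}$ is a norming functional of $w$, of which there is exactly one; I would either cite it or insert this short argument. A secondary, purely bookkeeping matter is tracking the signs $a_\pm\in\{-1,1\}$ delivered by hypothesis (iii); they enter only through $a_\pm f$ and $a_\pm^2=1$ and cancel out, so they present no genuine difficulty.
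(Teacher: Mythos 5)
Your proof is correct, but it takes a genuinely different route from the paper's. The paper handles the only nontrivial step --- showing $T\perp_B A\Rightarrow Tx_0\perp_B Ax_0$ --- by invoking the external characterization of operator Birkhoff--James orthogonality from \cite[Th.~2.4]{SPM}: either some norming sequence satisfies $\|Ax_n\|\to 0$, or there are two norming sequences $\{x_n\},\{y_n\}$ with $Ax_n\in (Tx_n)^{+\epsilon_n}$ and $Ay_n\in (Ty_n)^{-\delta_n}$, $\epsilon_n,\delta_n\to 0$; hypothesis (iii) then lets one pass to the limit in these one-sided approximate orthogonality relations and obtain $Ax_0\in (Tx_0)^{+}\cap (Tx_0)^{-}$ directly, without ever touching the norming functional of $Tx_0$ (hypothesis (ii) is used only to feed Theorem~\ref{theorem:nonempty} at the end). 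You instead manufacture your own almost-norm-attaining sequences for $T\pm\tfrac1n A$, show they are norming sequences for $T$, and combine hypothesis (iii) with the norm-to-weak$^*$ continuity of the duality map at the smooth point $Tx_0$ to squeeze $f(Ax_0)$ between $0$ and $0$. In effect you re-prove, in this special situation, exactly the half of \cite[Th.~2.4]{SPM} that the paper imports; what your version buys is self-containedness (no citation of the orthogonality characterization, only Banach--Alaoglu and the classical \v{S}mulian-type continuity of the support map at smooth points), at the cost of using smoothness of $Tx_0$ inside the hard implication rather than only at the final appeal to Theorem~\ref{theorem:nonempty}. All the estimates you give ($g_n(Au_n)>-\tfrac1n$, $h_n(Av_n)<\tfrac1n$, and the identification of the limits with $f(Ax_0)$ after the signs $a_\pm$ cancel) check out, and the easy converse implication is handled the same way in both arguments.
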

\begin{proof}
Let $A \in \mathbb{B}(\mathbb{X},\mathbb{Y})$ be such that $T\perp_B A.$ Then from \cite[Th. 2.4]{SPM}, we have, either (a) or (b) holds:\\
(a) There exists a norming sequence $ \{x_{n}\} $ for $T$ such that $ \|Ax_n\| \rightarrow 0. $\\
(b) There exist two norming sequences  $ \{x_{n}\}, ~\{y_{n}\} $ for $T$ and $ ~ \{\epsilon_{n}\}, ~ \{\delta_{n}\} \subseteq [0,1) $ such that $ \epsilon_{n} \rightarrow 0 ,$ $ \delta_{n} \rightarrow 0, $  $ Ax_n \in (Tx_n)^{+\epsilon_{n}}$ and $ Ay_n \in (Ty_n)^{-\delta_{n}} $ for all $ n \in \mathbb{N}. $\\
Suppose $(a)$ holds. Then $Ax_n \to 0.$ Again, by hypothesis, $\{x_n\}$ has a convergent subsequence converging to $ ax_0,$ where $|a|=1.$ Without loss of generality, assume that $x_n\to ax_0.$ Therefore, $Ax_n \to aAx_0.$ So, $Ax_0=0.$ Therefore, $Tx_0\perp_B Ax_0.$\\
Now, suppose that $(b)$ holds. Now, by hypothesis, $\{x_n\}$ and $\{y_n\}$ have  convergent subsequences converging to $ ax_0$ and $bx_0$ respectively, where $|a|=|b|=1.$ Without loss of generality assume that $x_n\to ax_0$ and $y_n\to bx_0.$ Therefore, $ Ax_n \in (Tx_n)^{+\epsilon_{n}}$ gives that for all $\lambda \geq 0,$
\begin{eqnarray*}
\|Tx_n+\lambda Ax_n\| &\geq& \sqrt{1-{\epsilon_n^2}}\|Tx_n\|\\
\Rightarrow \lim \|Tx_n+\lambda Ax_n\| &\geq& \lim\sqrt{1-{\epsilon_n^2}}\lim\|Tx_n\|\\
\Rightarrow \|aTx_0+\lambda aAx_0\| &\geq& \|aTx_0\|\\
\Rightarrow \|Tx_0+\lambda Ax_0\| &\geq& \|Tx_0\|.
\end{eqnarray*} 
Similarly, $ Ay_n \in (Ty_n)^{-\delta_{n}} $ gives that for all $\lambda \leq 0,$ $\|Tx_0+\lambda Ax_0\| \geq\|Tx_0\|.$ Hence, $Tx_0\perp_B Ax_0.$ Thus, for any $A \in \mathbb{B}(\mathbb{X},\mathbb{Y}),$ $T\perp_B A \Leftrightarrow Tx_0 \perp_B Ax_0.$ Therefore, from Theorem \ref{theorem:nonempty}, we have, $T$ is smooth.
\end{proof}

In the following theorem, we show that a compact operator $T$ defined between a reflexive Kadets-Klee Banach space $\mathbb{X}$ and a normed linear space $\mathbb{Y}$ is smooth in $\mathbb{B}(\mathbb{X},\mathbb{Y})$ if and only if $T$ is smooth in $\mathbb{K}(\mathbb{X},\mathbb{Y}).$

\begin{theorem}
	Let $\mathbb{X}$ be a reflexive Kadets-Klee Banach space and $\mathbb{Y}$ be a normed linear space. Then a compact operator $T$ is smooth in $\mathbb{B}(\mathbb{X},\mathbb{Y})$ if and only if $T$ is smooth in $\mathbb{K}(\mathbb{X},\mathbb{Y}).$
\end{theorem}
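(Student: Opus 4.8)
The plan is to use Theorem~\ref{theorem:nonempty} as the bridge between the two notions of smoothness, since a compact operator on a reflexive Banach space always attains its norm, so $M_T \neq \phi$ in both settings. The key point is that the characterization of smoothness of $T$ in Theorem~\ref{theorem:nonempty} is intrinsic --- conditions $(i)$ and $(ii)$ refer only to $M_T$ and to the point $Tx_0 \in \mathbb{Y}$, not to the ambient operator space --- while condition $(iii)$ is the only place where the ambient space ($\mathbb{B}(\mathbb{X},\mathbb{Y})$ versus $\mathbb{K}(\mathbb{X},\mathbb{Y})$) enters, via the clause ``for any $A$ in the space''. So the whole proof reduces to showing that the Birkhoff--James orthogonality condition $T \bot_B A \Leftrightarrow Tx_0 \bot_B Ax_0$ holds for all $A \in \mathbb{B}(\mathbb{X},\mathbb{Y})$ if and only if it holds for all $A \in \mathbb{K}(\mathbb{X},\mathbb{Y})$.

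First I would record the easy observations. Since $\mathbb{X}$ is reflexive and $T$ is compact, $T$ attains its norm, and if $T$ is smooth in \emph{either} space then by (the necessary part of) Theorem~\ref{theorem:nonempty} applied in that space we get $M_T = \{\pm x_0\}$ and $Tx_0$ smooth in $\mathbb{Y}$. Thus conditions $(i)$ and $(ii)$ of Theorem~\ref{theorem:nonempty} are equivalent to smoothness of $T$ together with the orthogonality clause, in each space separately, and they are space-independent. For the forward implication (smooth in $\mathbb{B}$ implies smooth in $\mathbb{K}$): if $T \bot_B A$ for $A \in \mathbb{K}(\mathbb{X},\mathbb{Y}) \subseteq \mathbb{B}(\mathbb{X},\mathbb{Y})$, then a fortiori $T \bot_B A$ in $\mathbb{B}(\mathbb{X},\mathbb{Y})$, whence $Tx_0 \bot_B Ax_0$ by smoothness of $T$ in $\mathbb{B}$; the converse direction of $(iii)$ (that $Tx_0 \bot_B Ax_0$ implies $T \bot_B A$) is automatic from $x_0 \in M_T$ as noted in the proof of Theorem~\ref{theorem:nonempty}. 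So $(iii)$ holds in $\mathbb{K}(\mathbb{X},\mathbb{Y})$, and with $(i),(ii)$ the sufficiency part of Theorem~\ref{theorem:nonempty} gives $T$ smooth in $\mathbb{K}(\mathbb{X},\mathbb{Y})$.

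The substantive direction is the converse: assuming $T$ smooth in $\mathbb{K}(\mathbb{X},\mathbb{Y})$, show $T$ smooth in $\mathbb{B}(\mathbb{X},\mathbb{Y})$. We again have $(i),(ii)$, and we must upgrade $(iii)$ from compact perturbations to arbitrary bounded ones: given $A \in \mathbb{B}(\mathbb{X},\mathbb{Y})$ with $T \bot_B A$, we must conclude $Tx_0 \bot_B Ax_0$. The natural route is to mimic the contradiction argument in the proof of $(iii)$ in Theorem~\ref{theorem:nonempty}: suppose $Tx_0 \not\bot_B Ax_0$, let $f$ be the unique norming functional of $Tx_0$, deduce $f(Ax_0) = r \neq 0$, and form $B = T - \tfrac{\|T\|}{r}A$, so that $f(Bx_0) = 0$, hence $Tx_0 \bot_B Bx_0$ and thus $T \bot_B B$. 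If $B$ were compact we could immediately invoke smoothness of $T$ in $\mathbb{K}$ to get $T \bot_B (B + \tfrac{\|T\|}{r}A) = T$, a contradiction --- but $B$ need not be compact. This is the main obstacle, and it is precisely where the Kadets--Klee hypothesis must be used: the point is that orthogonality $T \bot_B (\cdot)$ is ``witnessed'' by the norm attainment behaviour at $x_0$, and on a reflexive Kadets--Klee space every norming sequence for the compact operator $T$ has a subsequence norm-converging to $\pm x_0$ (weak convergence along a subsequence by reflexivity, then norm of the image converges by compactness of $T$ and continuity, then Kadets--Klee upgrades weak to norm convergence). So I would instead argue directly at $x_0$: from $T \bot_B A$ we want $Tx_0 \bot_B Ax_0$. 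Using \cite[Th. 2.4]{SPM} (as in Theorem~\ref{theorem:suff}), $T \bot_B A$ forces one of the two alternatives $(a)$, $(b)$ about norming sequences of $T$; by the Kadets--Klee + reflexivity + compactness argument every such norming sequence subconverges in norm to $\pm x_0$, so exactly as in the proof of Theorem~\ref{theorem:suff} we pass to the limit and obtain $Tx_0 \bot_B Ax_0$ (handling $\lambda \ge 0$ and $\lambda \le 0$ via the $(Tx_n)^{+\epsilon_n}$, $(Ty_n)^{-\delta_n}$ membership, using $\epsilon_n,\delta_n \to 0$ and $Tx_n \to \pm Tx_0$, $Ax_n \to \pm Ax_0$ by continuity of $A$). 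This establishes $(iii)$ for all $A \in \mathbb{B}(\mathbb{X},\mathbb{Y})$, and Theorem~\ref{theorem:nonempty} then yields that $T$ is smooth in $\mathbb{B}(\mathbb{X},\mathbb{Y})$, completing the proof. In fact this shows conditions $(i)$--$(iii)$ of Theorem~\ref{theorem:suff} are met, so one can simply cite Theorem~\ref{theorem:suff} directly for this direction.
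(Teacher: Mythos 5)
Your proposal is correct and follows essentially the same route as the paper: from smoothness in $\mathbb{K}(\mathbb{X},\mathbb{Y})$ obtain $M_T=\{\pm x_0\}$ and smoothness of $Tx_0$, use reflexivity, compactness of $T$ and the Kadets--Klee property to show every norming sequence has a subsequence norm-converging to $\pm x_0$, and then invoke Theorem~\ref{theorem:suff} (the paper cites \cite[Th. 4.2]{PSG} for the first step rather than your ``Theorem~\ref{theorem:nonempty} applied in $\mathbb{K}(\mathbb{X},\mathbb{Y})$'', whose proof does indeed adapt since the perturbations constructed there are rank one). For the easy direction the paper simply notes it is trivial (smoothness passes to subspaces via Hahn--Banach), while your argument through Theorem~\ref{theorem:nonempty} is also valid, just more roundabout.
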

\begin{proof}
	The necessary part of the theorem is trivial. We only prove the sufficient part of the theorem. Let $T$ be smooth in $\mathbb{K}(\mathbb{X},\mathbb{Y}).$ Then from \cite[Th. 4.2]{PSG}, $M_T=\{\pm x_0\}$ for some $x_0\in S_{\mathbb{X}}$ and $Tx_0$ is a smooth point in $\mathbb{Y}.$ At first, we show that if $\{x_n\}$ is any norming sequence for $T,$ then $\{x_n\}$ has a convergent subsequence converging to $ ax_0,$ where $|a|=1.$ Let $\{x_n\}$ be any norming sequence for $T.$ Since $\mathbb{X}$ is reflexive, $B_{\mathbb{X}}$ is weakly compact and so $\{x_n\}$ has a weakly convergent subsequence  $\{x_{n_k}\}$ which weakly converges to $x$ (say) in $B_{\mathbb{X}}.$ Since $T$ is compact, $Tx_{n_k} \to Tx.$ This implies that $\|Tx_{n_k}\|\to \|Tx\|.$ Thus, $\|Tx\|=\|T\|\Rightarrow x\in M_T=\{\pm x_0\}.$ Hence, $\|x\|=1.$ Therefore, $\|x_{n_k}\|\to \|x\|$. Since $\mathbb{X}$ is a Kadets-Klee Banach space, we obtain $x_{n_k}\to x,$ where $x\in \{\pm x_0\}.$ Thus $x_{n_k} \to ax_0,$ where $|a|=1.$ Thus, $T$ satisfies all the conditions of Theorem \ref{theorem:suff} and so $T$ is smooth in $\mathbb{B}(\mathbb{X},\mathbb{Y}).$   
\end{proof}

In the following theorem, we obtain a necessary condition for smoothness of a bounded linear operator $T$ defined between any two normed linear spaces, in terms of norming sequences, when the norm attainment set is non-empty. We would like to emphasize that the condition obtained by us is of independent interest as it does not concern Birkhoff-James orthogonality.

\begin{theorem}\label{theorem:necessary1}
Let $ \mathbb{X}, \mathbb{Y} $ be any two normed linear spaces. Let $ T \in \mathbb{B}(\mathbb{X},\mathbb{Y}) $ be a smooth operator and  $ M_T \neq \emptyset. $ Then the following conditions hold:\\
 (i) $ M_{T} = \{\pm x_0\} $ for some $ x_0 \in S_{\mathbb{X}} $ \\
(ii)  $ Tx_0 $ is a smooth point in $ \mathbb{Y}. $ \\
(iii)   $ x_0 \in \overline{span \{ x_{n}:  n \in \mathbb{N}\}}, $ whenever $\{x_n\}$ is a norming sequence for $T$.
\end{theorem}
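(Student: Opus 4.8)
The plan is to obtain (i) and (ii) for free and to concentrate all the work on (iii). Since $T$ is smooth with $M_T\neq\emptyset$, Theorem~\ref{theorem:nonempty} applies directly and gives at once that $M_T=\{\pm x_0\}$ for some $x_0\in S_{\mathbb{X}}$ (this is (i)) and that $Tx_0$ is a smooth point of $\mathbb{Y}$ (this is (ii)); moreover, and this is the tool I will exploit for (iii), it gives the equivalence that for every $A\in\mathbb{B}(\mathbb{X},\mathbb{Y})$ one has $T\perp_B A \Leftrightarrow Tx_0\perp_B Ax_0$.

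For (iii), fix an arbitrary norming sequence $\{x_n\}$ for $T$ and set $Y_0=\overline{span\{x_n:n\in\mathbb{N}\}}$, a closed subspace of $\mathbb{X}$. I would argue by contradiction: assume $x_0\notin Y_0$. Because $Y_0$ is closed, $d:=\operatorname{dist}(x_0,Y_0)>0$. On the subspace $Z=Y_0\oplus span\{x_0\}$ define the linear functional $\phi_0(z+\alpha x_0)=\alpha$ for $z\in Y_0$, $\alpha\in\mathbb{R}$; the inequality $\|z+\alpha x_0\|\geq|\alpha|\,d$ shows $\phi_0$ is bounded on $Z$, so by the Hahn--Banach theorem it extends to a bounded linear functional $\phi$ on $\mathbb{X}$ with $\phi(x_n)=0$ for all $n$ and $\phi(x_0)=1$. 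Now define $A\in\mathbb{B}(\mathbb{X},\mathbb{Y})$ by $Ax=\phi(x)\,Tx_0$; this is a well-defined bounded linear operator with $Ax_n=0$ for every $n$ and $Ax_0=Tx_0$.

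The next step is to check $T\perp_B A$: for every scalar $\lambda$, since $\|x_n\|=1$ and $\phi(x_n)=0$, we get $\|T+\lambda A\|\geq\lim_n\|(T+\lambda A)x_n\|=\lim_n\|Tx_n+\lambda\phi(x_n)Tx_0\|=\lim_n\|Tx_n\|=\|T\|$, so indeed $T\perp_B A$. Applying the equivalence from Theorem~\ref{theorem:nonempty}, we obtain $Tx_0\perp_B Ax_0=Tx_0$. But $Tx_0\neq 0$ because $\|Tx_0\|=\|T\|>0$, and a nonzero vector is never Birkhoff--James orthogonal to itself: choosing $\lambda=-1$ in the definition gives $0=\|Tx_0-Tx_0\|\geq\|Tx_0\|>0$, a contradiction. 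Hence $x_0\in Y_0$, which is precisely (iii).

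I do not expect a serious obstacle here; the only point needing a little care is the construction of $\phi$, where one must observe that $x_0\notin Y_0$ together with the closedness of $Y_0$ forces $\operatorname{dist}(x_0,Y_0)>0$, which is exactly what makes $\phi_0$ bounded and hence Hahn--Banach--extendable. After that, the operator $A=\phi(\cdot)\,Tx_0$ is engineered so that it annihilates the chosen norming sequence --- making $T\perp_B A$ automatic --- while simultaneously $Ax_0=Tx_0$, forcing the impossible self-orthogonality through Theorem~\ref{theorem:nonempty}. The real content of the proof lies in already having that theorem at our disposal.
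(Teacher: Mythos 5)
Your proposal is correct and follows essentially the same route as the paper: (i) and (ii) are quoted from the necessary part of Theorem~\ref{theorem:nonempty}, and for (iii) you build, via Hahn--Banach from the distance bound $d(x_0,\overline{span\{x_n\}})>0$, a functional $\phi$ killing the norming sequence with $\phi(x_0)=1$ and set $A=\phi(\cdot)\,Tx_0$, which is exactly the paper's rank-one operator $A(\alpha x_0+h)=\alpha Tx_0$. The only cosmetic difference is that you phrase the contradiction as $Tx_0\perp_B Tx_0$ being impossible, while the paper phrases it as $T\perp_B A$ with $Tx_0\not\perp_B Ax_0$ contradicting the equivalence in Theorem~\ref{theorem:nonempty}; these are the same argument.
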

\begin{proof}
Clearly (i) follows from  \cite[Th. 4.5]{PSG} and (ii) follows from  Theorem \ref{theorem:nonempty}. We prove (iii).  Let $\{x_n\}$ be a norming sequence for $T$ and  $ span \{ x_{n}:  n \in \mathbb{N}\}= X_1. $ If possible, let $ x_0 \not\in \overline{X_1}. $ Then we have $ d(x_0, X_1)= r > 0. $ Let $ X_2$ be the subspace of $\mathbb{X}$ spanned by $ \{x_0\} $ and $ X_1 . $ Then define $ f : X_2 \longrightarrow \mathbb{R} $ by $ f(\alpha x_0 + w) = \alpha, $ where $ \alpha \in \mathbb{R} $ and $ w \in X_1. $ Now, for any $\alpha \neq 0, $ $ \|\alpha x_0 + w\| = |\alpha| \| x_0 + \frac{w}{\alpha}\| \geq |\alpha|r. $  Then $ |f(\alpha x_0 + w)| = |\alpha| \leq \frac{\|\alpha x_0 + w\|}{r}. $ So, $ \|f\| \leq \frac{1}{r}. $ Therefore, we have $ f \in X_{2}^{*}. $ Hence, by Hahn-Banach theorem there exists a norm preserving extension $ g \in \mathbb{X}^{*} $ of $ f. $ So, we have $ g(x_0)=1 $ and $ g(w)=0 $ for all $ w \in X_1. $ Now, any element $ z \in \mathbb{X} $ can be uniquely written as $ z= \alpha x_0 + h, $ where $ \alpha \in \mathbb{R} $ and $ h \in \ker(g). $  Define a linear operator $ A : \mathbb{X} \longrightarrow \mathbb{Y} $ by $ A(\alpha x_0+ h) = \alpha Tx_0. $  Then $ \|A(\alpha x_0+h)\|=|\alpha|\|Tx_0\|. $ Now, $ \|g\|\|\alpha x_0 + h \| \geq |g(\alpha x_0 + h)|=|\alpha|. $ Therefore,
\[\|A(\alpha x_0+ h)\|=|\alpha|\|Tx_0\| \leq \|T\| \|g\|\|\alpha x_0 + h \|. \]
Thus, $\|A\| \leq \|T\|\|g\|$ and $ A \in \mathbb{B}(\mathbb{X},\mathbb{Y}). $ Now, $\|T+\lambda A\| \geq \|(T+\lambda A)x_n\|= \|Tx_n\| \rightarrow \|T\| $ for all scalar $ \lambda. $ Therefore, $T \bot_B A $ but $ Tx_0 \not\perp_B Ax_0, $ as $ Tx_0=Ax_0. $ So from Theorem \ref{theorem:nonempty} it follows that $T$ is not smooth, which is a contradiction. Thus, $ x_0 \in \overline{X_1}= \overline{span \{ x_{n}: ~ n \in \mathbb{N}\}}. $ 
\end{proof}

We now give two examples, illustrating the usefulness of Theorem \ref{theorem:necessary1} and Theorem \ref{theorem:suff}, towards identifying smooth bounded linear operators on normed linear spaces in general, and $ l_p $ spaces in particular, provided the corresponding norm attainment set is non-empty.

\begin{example}
Consider a bounded linear operator $T:l_p\to l_p (1<p<\infty),$ defined by $Te_1=e_1$ and $Te_n=(1-\frac{1}{n})e_n ~\forall ~n \geq 2,$ where $e_n=(\underbrace{0,0,\ldots,0,1,}_{n}0,\ldots)$ Then clearly, $ \|T\|=1 $ and $ M_T= \{ \pm e_1\}.$ Also $\|Te_n\|\to 1=\|T\|$ and $e_1 \not \in \overline{span \{e_n:n\in \mathbb{N}\}}.$ Therefore, applying Theorem \ref{theorem:necessary1}, we conclude that $T$ is not smooth.

\end{example}

\begin{example}
Consider a bounded linear operator $T:l_p\to l_p (1<p<\infty),$ defined by $Te_1=e_1$ and $Te_n=\frac{1}{2}e_n ~\forall ~n \geq 2,$ where $e_n=(\underbrace{0,0,\ldots,0,1,}_{n}0,\ldots)$. Then clearly, $\|T\|=1$ and $M_T=\{\pm e_1\}.$ Now, let $\{y_n\}$ be a norming sequence for $T.$ Let $y_n=(a_{1n},a_{2n},a_{3n},\ldots).$ Then $\|Ty_n\|^p=|a_{1n}|^p+\frac{1}{2^p}\sum_{i=2}^{\infty}|a_{in}|^p=|a_{1n}|^p+\frac{1}{2^p}(1-|a_{1n}|^p)=|a_{1n}|^p(1-\frac{1}{2^p})+\frac{1}{2^p}.$ Therefore, $\|Ty_n\|^p\to \|T\|^p\Rightarrow |a_{1n}|^p\to 1.$ Now, $1=\|y_n\|^p=|a_{1n}|^p+\sum_{i=2}^{\infty}|a_{in}|^p$ and $|a_{1n}|^p\to 1$ implies that $\sum_{i=2}^{\infty}|a_{in}|^p\to 0.$ Thus, $\{y_n\}$ has a subsequence converging to $ ae_1,$ where $|a|=1.$ Therefore, from Theorem \ref{theorem:suff}, it follows that $T$ is smooth. 
\end{example}
We end this paper with a rather surprising result that if $ \mathbb{X} $ is a reflexive Kadets-Klee Banach space and $ \mathbb{Y} $ is Fr\'{e}chet differentiable normed linear space then G\^{a}teaux differentiability and Fr\'{e}chet differentiability are equivalent in $ \mathbb{B}(\mathbb{X},\mathbb{Y}), $ for elements in $ \mathbb{K}(\mathbb{X},\mathbb{Y}). $ To be more precise, using a characterization of Fr\'{e}chet differentiability \cite[Th. 3.1]{HR} in the space of bounded linear operators, we prove the following theorem. For this we use the notation $M_T(\delta),$ for $ 0 < \delta < \|T\|$, introduced in \cite{Sain}, which is defined as, $M_T(\delta) = \{ x \in S_{\mathbb{X}} : \|Tx \| > \|T\| - \delta \}$ i.e., $M_T(\delta)$ is the collection of all unit elements $x$ for which the norm of the image is $\delta-$close to $\|T\|.$
\begin{theorem} \label{Theorem:strong-diff}
Let $ \mathbb{X} $ be a reflexive Kadets-Klee Banach space and $ \mathbb{Y} $ be a Fr\'{e}chet differentiable normed linear space. Let $ T $ be a compact operator from $\mathbb{X}$ to $\mathbb{Y}$ with $ \|T\|=1. $ Then $ \mathbb{B}(\mathbb{X},\mathbb{Y}) $ with the usual operator norm is G\^{a}teaux differentiable at $ T $  if and only if $ \mathbb{B}(\mathbb{X},\mathbb{Y}) $ with the same norm is Fr\'{e}chet differentiable at $ T. $  
\end{theorem}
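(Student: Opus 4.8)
The plan is to reduce the claim about differentiability of the operator norm at $T$ to the geometry of the norm attainment structure of $T$, using the characterization of Fr\'{e}chet differentiability in operator spaces due to Heinrich \cite[Th. 3.1]{HR}. Since Fr\'{e}chet differentiability always implies G\^{a}teaux differentiability, only the converse direction needs proof: assuming $\mathbb{B}(\mathbb{X},\mathbb{Y})$ is G\^{a}teaux differentiable (equivalently, smooth) at $T$, we must show it is Fr\'{e}chet differentiable at $T$. The first step is to invoke that $T$ is smooth and compact with $\mathbb{X}$ reflexive, so by \cite[Th. 4.2]{PSG} (as already used above) we get $M_T = \{\pm x_0\}$ for some $x_0 \in S_{\mathbb{X}}$ with $Tx_0$ a smooth point of $\mathbb{Y}$; moreover, exactly as in the proof of the previous theorem, the Kadets-Klee property forces every norming sequence $\{x_n\}$ for $T$ to have a subsequence converging (in norm) to $\pm x_0$.

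The core of the argument is to translate Heinrich's criterion into a verifiable statement. Heinrich's theorem characterizes Fr\'{e}chet smoothness of the operator norm at $T$ (with $\|T\|=1$) via a condition on the sets $M_T(\delta)$: roughly, $\|T\|$ is Fr\'{e}chet differentiable at $T$ if and only if $Tx$ is Fr\'{e}chet-smooth in $\mathbb{Y}$ uniformly over $x \in M_T(\delta)$ as $\delta \to 0^+$, together with a "shrinking" condition saying that $M_T(\delta)$ clusters (in an appropriate sense) around $M_T$ as $\delta \to 0^+$. So the key steps are: (a) show $M_T(\delta)$ shrinks to $\{\pm x_0\}$ as $\delta \to 0^+$ — precisely, for every $\varepsilon > 0$ there is $\delta > 0$ such that every $x \in M_T(\delta)$ satisfies $\min\{\|x - x_0\|, \|x + x_0\|\} < \varepsilon$; this follows from the norming-sequence convergence established in the first step, by a standard contradiction/diagonal argument (if it failed, one could extract a norming sequence staying bounded away from $\pm x_0$, contradicting compactness plus Kadets-Klee). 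Then (b) show that $Tx$ is uniformly Fr\'{e}chet-smooth for $x$ ranging over $M_T(\delta)$: since $T$ is continuous and $M_T(\delta) \to \{\pm x_0\}$, the images $Tx$ concentrate near $\pm Tx_0$; since $\mathbb{Y}$ is Fr\'{e}chet differentiable \emph{everywhere}, the relevant uniform smoothness modulus is controlled on a neighborhood of $\pm Tx_0/\|Tx_0\|$, and one transfers this to the $Tx$'s. Finally (c) assemble (a) and (b) to verify the hypotheses of \cite[Th. 3.1]{HR} and conclude Fr\'{e}chet differentiability at $T$.

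I expect the main obstacle to be step (b): making the passage from "every point of $\mathbb{Y}$ is Fr\'{e}chet-smooth" to "the norm is Fr\'{e}chet differentiable at the points $Tx$ \emph{uniformly} in $x \in M_T(\delta)$" fully rigorous. Pointwise Fr\'{e}chet differentiability of the norm of $\mathbb{Y}$ does not immediately give uniformity over a set of points, even a small one — one needs the differentiability modulus to be uniformly controlled on a norm-neighborhood of the limiting direction $Tx_0/\|Tx_0\| \in S_{\mathbb{Y}}$. The cleanest way around this is to observe that the Fr\'{e}chet-smoothness modulus at a point $y \in S_{\mathbb{Y}}$ is upper semicontinuous (or can be dominated near $y$ using the triangle inequality and the modulus at $y$ itself applied to nearby points with a perturbation estimate), so that on a small enough ball around $Tx_0/\|Tx_0\|$ a single modulus works; combined with step (a), which confines all $Tx/\|Tx\|$ (for $x \in M_T(\delta)$, $\delta$ small) to such a ball, this yields the required uniformity. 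A secondary technical point is handling the two antipodal clusters $\pm x_0$ simultaneously, but since $M_T(\delta)$ splits into two pieces near $x_0$ and near $-x_0$ and the norm is even, this is routine. With these in hand, Heinrich's criterion applies and the theorem follows.
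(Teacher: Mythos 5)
Your overall strategy is the same as the paper's: reduce to Heinrich's criterion \cite[Th.\ 3.1]{HR}, get $M_T=\{\pm x_0\}$ and smoothness of $Tx_0$ from \cite[Th.\ 4.2]{PSG}, and use reflexivity, compactness of $T$ and the Kadets--Klee property to force every norming sequence to converge (up to sign) to $x_0$; your step (a), the localization $M_T(\delta)\subseteq B(x_0,\varepsilon)\cup B(-x_0,\varepsilon)$ via a contradiction argument, is correct and is exactly the content the paper imports from \cite[Th.\ 2.3]{Sain}. The genuine problem is your step (b). You have misquoted Heinrich's criterion: it does \emph{not} ask for Fr\'{e}chet smoothness of the norm of $\mathbb{Y}$ uniformly over the points $Tx$, $x\in M_T(\delta)$; besides unique norm attainment and the norming-sequence (equivalently $M_T(\delta)$-shrinking) condition, it only requires Fr\'{e}chet differentiability of the norm of $\mathbb{Y}$ at the single point $Tx_0$, which is immediate from the hypothesis that $\mathbb{Y}$ is Fr\'{e}chet differentiable. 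Worse, the argument you propose for the spurious uniform version would fail: the claim that the Fr\'{e}chet modulus is ``upper semicontinuous, so a single modulus works on a small ball around $Tx_0/\|Tx_0\|$'' is not valid. For fixed $t$ the quantity $\rho_y(t)=\sup_{\|h\|\le t}\bigl(\|y+h\|+\|y-h\|-2\|y\|\bigr)$ is indeed Lipschitz in $y$, but the resulting estimate $\rho_y(t)/t\le \rho_{y_0}(t)/t+4\|y-y_0\|/t$ blows up as $t\to 0$ for fixed $\|y-y_0\|$, so pointwise Fr\'{e}chet differentiability of the norm of $\mathbb{Y}$ does not yield a common modulus on any norm-neighborhood of $Tx_0/\|Tx_0\|$ (uniform Fr\'{e}chet differentiability is a strictly stronger property than everywhere Fr\'{e}chet differentiability, and your hypothesis only gives the latter).

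The repair is simply to use the criterion in its correct form, as the paper does: verify (a) $M_T=\{\pm x_0\}$, (b) for every norming sequence $\{x_n\}$ there are signs $t_n\in\{\pm 1\}$ with $t_nx_n\to x_0$ (your step (a) gives this, after the routine bookkeeping with the two disjoint balls $B(\pm x_0,\tfrac12)$ to fix the signs), and (c) Fr\'{e}chet differentiability of the norm of $\mathbb{Y}$ at $Tx_0$ alone. With that correction your argument closes and coincides with the paper's proof, the only stylistic difference being that you prove the $M_T(\delta)$ localization directly rather than citing \cite[Th.\ 2.3]{Sain}.
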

\begin{proof}
``If" part of the theorem is trivial. We here prove the ``only if" part. Let the operator norm on $ \mathbb{B}(\mathbb{X},\mathbb{Y}) $ is G\^{a}teaux differentiable at $ T. $ Then the operator norm on $ \mathbb{K}(\mathbb{X},\mathbb{Y}) $ is G\^{a}teaux differentiable at $ T, $ as $ \mathbb{K}(\mathbb{X},\mathbb{Y}) \subseteq \mathbb{B}(\mathbb{X},\mathbb{Y}). $ To prove the operator norm is Fr\'{e}chet differentiable at $ T, $ we will show that all the conditions of Theorem $ 3.1 $ of \cite{HR} hold true. Since the norm of $ \mathbb{K}(\mathbb{X},\mathbb{Y}) $ is G\^{a}teaux differentiable at $ T, $ $ T $ is smooth. Then from the Theorem $ 4.2 $ of \cite{PSG}, we have $ M_{T}= \{ \pm x_0\} $ for some $ x_0 \in S_{\mathbb{X}} $ and $ Tx_0 $ is a smooth point in $ \mathbb{Y}. $ As $ \mathbb{Y} $ is Fr\'{e}chet differentiable, $ Tx_0 $ is a Fr\'{e}chet differentiable point in $ \mathbb{Y}. $ Thus, conditions (a) and (c) of Theorem $ 3.1 $ of \cite{HR} are satisfied. We will show that (b) also holds true. Let $ \{x_n\} $ be a norming sequence for $ T. $ Let us choose $ \epsilon_0 = \frac{1}{2}. $ Clearly , $ B(x_0, \epsilon_0) \cap B(-x_0, \epsilon_0) = \emptyset. $ By applying Theorem $ 2.3 $ of \cite{Sain}, there exists $ \delta_0 = \delta(\epsilon_0) > 0 $ such that $ M_T(\delta_0) \subseteq B(x_0, \epsilon_0) \cup B(-x_0, \epsilon_0). $ On the other hand, since $ \|Tx_n\| \rightarrow \|T\|, $ there exists $ n_0 \in \mathbb{N} $ such that $ \|Tx_n\| > \|T\|-\delta_0 $ for all $ n \geq n_0. $ In other words, $ x_n \in M_T(\delta_0) \subseteq B(x_0, \epsilon_0) \cup B(-x_0, \epsilon_0) $ for all $ n \geq n_0. $ Since $ B(x_0, \epsilon_0) \cap B(-x_0, \epsilon_0) = \emptyset, $ for any $ n \geq n_0, $ $x_n $ is in exactly one of $ B(x_0, \epsilon_0) $ and $ B(-x_0, \epsilon_0). $ Now, we choose $ t_n = 1 $ for all $ n < n_0. $ Moreover, for any $ n \geq n_0, $ we define $ t_n=1 $ if $ x_n \in  B(x_0, \epsilon_0) $ and $ t_n=-1 $ if $ x_n \in  B(-x_0, \epsilon_0). $ Clearly $ t_n x_n \in B(x_0, \epsilon_0) $ for all $ n \geq n_0. $ Now, we claim that $ t_n x_n \rightarrow x_0. $ Let $ \epsilon > 0 $ be arbitrary. Without loss of generality we may assume that $ \epsilon < \frac{1}{2}=\epsilon_0. $ By applying Theorem $ 2.3 $ of \cite{Sain}, there exists $ \delta = \delta(\epsilon) > 0 $ such that $ M_T(\delta) \subseteq B(x_0, \epsilon) \cup B(-x_0, \epsilon). $ Since $ \|Tx_n\| \rightarrow \|T\|, $ there exists $ n_1 \in \mathbb{N} $ such that $ \|Tx_n\| > \|T\|-\delta, $ i.e., $ x_n \in M_T(\delta) \subseteq B(x_0, \epsilon) \cup B(-x_0, \epsilon) $ for all $ n \geq n_1. $ Let $ n_2 = max\{n_0,n_1\}. $ Then for all $ n \geq n_2, $ by virtue of our choice of $ t_n, $ $ t_n x_n \in B(x_0, \epsilon). $ Thus, $ t_n x_n \rightarrow x_0. $ Therefore, the norm on $ \mathbb{B}(\mathbb{X},\mathbb{Y}) $ is Fr\'{e}chet differentiable at $ T. $ This completes the proof.
\end{proof}

As an immediate application of Theorem \ref{Theorem:strong-diff}, it follows that there exist infinite-dimensional operator spaces, in which G\^{a}teaux differentiability and Fr\'{e}chet differentiability are equivalent.

\begin{cor}
Let $ p,q \in \mathbb{N}, $ with $ 1 < q < p. $ Let $ \mathbb{X} = \mathbb{B}(\ell_p,\ell_q), $ with the usual operator norm. Then G\^{a}teaux differentiability and Fr\'{e}chet differentiability are equivalent for the norm on $ \mathbb{X}. $
\end{cor}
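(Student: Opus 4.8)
The plan is to deduce this corollary directly from Theorem \ref{Theorem:strong-diff}, so the only real work is checking that the spaces $\ell_p$ and $\ell_q$ (with $1<q<p$) meet the hypotheses of that theorem, and that every nonzero element of $\mathbb{B}(\ell_p,\ell_q)$ at which G\^ateaux differentiability could be in question is in fact a compact operator. First I would normalize: G\^ateaux (Fr\'echet) differentiability of the norm on $\mathbb{X}=\mathbb{B}(\ell_p,\ell_q)$ at a point $T\neq 0$ is, by homogeneity of the norm, equivalent to the same property at $T/\|T\|$, so it suffices to treat $T$ with $\|T\|=1$; and the zero operator is never a point of differentiability (nor is it ever smooth), so we may ignore it. Thus the statement to prove reduces to: for every $T\in\mathbb{B}(\ell_p,\ell_q)$ with $\|T\|=1$, the norm on $\mathbb{X}$ is G\^ateaux differentiable at $T$ iff it is Fr\'echet differentiable at $T$.

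Next I would verify the three structural facts needed to invoke Theorem \ref{Theorem:strong-diff} with $\mathbb{X}_0=\ell_p$ in the role of the domain and $\mathbb{Y}_0=\ell_q$ in the role of the codomain. (a) $\ell_p$ is a reflexive Banach space since $1<p<\infty$. (b) $\ell_p$ is Kadets-Klee: this is the classical Radon-Riesz property of $\ell_p$ for $1<p<\infty$ (weak convergence together with convergence of norms implies norm convergence), which I would simply cite. (c) $\ell_q$ is Fr\'echet differentiable for $1<q<\infty$: its norm is Fr\'echet differentiable at every nonzero point because $\ell_q$ is a uniformly convex space whose dual $\ell_{q'}$ is also uniformly convex, equivalently $\ell_q$ is uniformly smooth, and uniform smoothness implies Fr\'echet differentiability of the norm everywhere; again I would cite this standard fact. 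So $\mathbb{X}=\ell_p$ and $\mathbb{Y}=\ell_q$ satisfy exactly the ambient hypotheses ``reflexive Kadets-Klee Banach space'' and ``Fr\'echet differentiable normed linear space'' required in Theorem \ref{Theorem:strong-diff}.

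The remaining ingredient — and the one place where the hypothesis $q<p$ is used — is that every bounded linear operator from $\ell_p$ to $\ell_q$ with $q<p$ is compact. This is Pitt's theorem: $\mathbb{B}(\ell_p,\ell_q)=\mathbb{K}(\ell_p,\ell_q)$ whenever $1\le q<p<\infty$. Hence any $T\in\mathbb{X}$ with $\|T\|=1$ is automatically a compact operator, so Theorem \ref{Theorem:strong-diff} applies to it verbatim and yields: the operator norm on $\mathbb{B}(\ell_p,\ell_q)$ is G\^ateaux differentiable at $T$ iff it is Fr\'echet differentiable at $T$. Combining this with the reduction to norm-one operators in the first step gives the corollary. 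I expect no genuine obstacle here; the ``hard part'' is merely making sure the citations are in place — Pitt's theorem for the compactness of every operator, the Radon-Riesz property for the Kadets-Klee condition on $\ell_p$, and uniform smoothness of $\ell_q$ for its Fr\'echet differentiability — since all the analytic content is already carried by Theorem \ref{Theorem:strong-diff}.
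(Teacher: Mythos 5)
Your proposal is correct and follows essentially the same route as the paper: verify that $\ell_p$ ($1<p<\infty$) is reflexive, Kadets-Klee and that $\ell_q$ is Fr\'{e}chet differentiable, invoke Pitt's theorem to get $\mathbb{B}(\ell_p,\ell_q)=\mathbb{K}(\ell_p,\ell_q)$ since $q<p$, and then apply Theorem \ref{Theorem:strong-diff}. Your extra remarks on normalizing to $\|T\|=1$ and excluding $T=0$ are a harmless refinement of a detail the paper leaves implicit.
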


\begin{proof} We first note that $ \ell_p (1 < p < \infty) $ spaces are reflexive, Kadets-Klee and Fr\'{e}chet differentiable Banach spaces. Since $ 1 < q < p, $ it follows from the well-known Pitt's theorem \cite{D} that $ \mathbb{X} = \mathbb{B}(l_p,l_q) = \mathbb{K}(l_p,l_q). $ Therefore, the result follows directly from Theorem \ref{Theorem:strong-diff}.
\end{proof}

\bibliographystyle{amsplain}

\end{document}